\documentclass[12pt,reqno]{amsart}

\usepackage{amsmath}
\usepackage{amsfonts}
\usepackage{amscd}
\usepackage{amssymb}
\usepackage{amsthm}
\usepackage{mathdots}
\usepackage{mathtools}
\usepackage{color}
\usepackage{bm}
\usepackage{enumerate}
\usepackage{graphicx}
\usepackage[linktocpage=true]{hyperref}
\usepackage{mathrsfs}
\usepackage{caption}

\usepackage{microtype}

\usepackage{tikz,tikz-cd, color}

\usepackage{fullpage}
\usepackage{comment}

\newtheorem{thm}{Theorem}
\newtheorem*{thm*}{Theorem}
\newtheorem{lem}[thm]{Lemma}
\newtheorem{prop}[thm]{Proposition}
\newtheorem{cor}[thm]{Corollary}
\newtheorem{thm&defn}[thm]{Theorem \& Definition}
\newtheorem{defn}[thm]{Definition}

\theoremstyle{remark}

\numberwithin{equation}{section}
\numberwithin{thm}{section}

\newcommand{\cF}{\mathcal{F}}

\newcommand{\bF}{\mathbb{F}}
\newcommand{\bP}{\mathbb{P}}

\newcommand{\lExt}{{\mathcal{E}xt}}

\newcommand{\db}{\operatorname{D}^{\rm b}}

\DeclareMathOperator{\Ext}{Ext}

\DeclareMathOperator{\rk}{rk}
\DeclareMathOperator{\im}{im}
\DeclareMathOperator{\Aut}{Aut}
\DeclareMathOperator{\Pic}{Pic}

\title{Autoequivalences of Derived Categories of Moduli Spaces of Vector Bundles}

\author[]{Haotian Zuo}
\address{Department of Mathematics, University of Houston, Houston, TX, USA}
\email{hzuo@cougarnet.uh.edu}
\subjclass[2020]{14F08, 14D20, 14H60, 18G80}
\keywords{Derived category, autoequivalence, moduli space of vector bundles, Fourier--Mukai functor}
\begin{document}

\begin{abstract}
    Let \(C\) be a smooth projective curve over an algebraically closed field of
characteristic zero. For the moduli space \(N(r,L)\) of stable vector bundles
on \(C\) of rank \(r\) with fixed determinant \(L\), we study the group of exact
autoequivalences of its bounded derived category. Combining the Bondal--Orlov
reconstruction theorem with the descriptions of \(\operatorname{Aut}(N(r,L))\)
due to Kouvidakis--Pantev and Newstead, we obtain an explicit
description of \(\operatorname{Aut}\db(N(r,L))\). We also construct extension correspondences between moduli spaces of vector
bundles of different ranks and use them to define natural Fourier--Mukai type
exact functors between their bounded derived categories.
\end{abstract}
\maketitle
\section{Introduction}
The moduli space of vector bundles over a nonsingular projective curve has been intensively studied for decades. In recent years, considerable attention has been devoted to the derived categories of these moduli spaces.

A central problem in the study of the structure of derived categories of algebraic varieties is to determine their groups of autoequivalences. However, for general algebraic varieties, the structure of the group of autoequivalences of the derived category is highly complicated, and they are known only in a few cases (such as for Fano varieties \cite{BO01} or K3 surfaces \cite{Orl97}).

Let $C$ be a nonsingular projective curve of genus $g\geq 2$ over an algebraically closed field $k$ of characteristic $0$. Suppose $r$ is an integer $\geq 2$ and $L$ is a line bundle of degree $d$ on $C$.
Let $M=M(r,d)=M_C(r,d)$ denote the moduli space of stable vector bundles of rank $r$ and degree $d$. 
Similarly, let $N=N(r,L)=N_C(r,L)\subset M$ denote the moduli of those with determinant $L$. 
We make the usual assumption 
$\gcd(r,d)=1$
such that $M$ and $N$ are projective, nonsingular, and fine moduli spaces.
We will study the bounded derived categories
\(\mathrm{D}^{\mathrm b}(M)\) and \(\mathrm{D}^{\mathrm b}(N)\) of coherent sheaves on \(M\) and \(N\), respectively. For a smooth projective variety \(X\), we write
\(\operatorname{Aut}\mathrm{D}^{\mathrm b}(X)\) for the group of exact autoequivalences of
\(\mathrm{D}^{\mathrm b}(X)\), with group law given by composition.

When the genus of \(C\) is at least \(3\), combining the Bondal--Orlov reconstruction theorem with the theorem of Kouvidakis--Pantev gives the following explicit description of \(\operatorname{Aut}D^b(N(r,L))\). 
\begin{thm}
    Suppose \(g\geq 3\). Then 
\[
\operatorname{Aut}\mathrm{D}^{\mathrm{b}}(N(r,L))
\cong
\begin{cases}
\mathbb Z^2\times \mathfrak G^\circ_{[r]}, & \text{if } r\neq 2,\\[6pt]
\mathbb Z^2\times \mathfrak G_{[2]}, & \text{if } r=2.
\end{cases}
\]
\end{thm}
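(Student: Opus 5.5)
The plan is to reduce to the Bondal--Orlov theorem: for a smooth projective variety $X$ with ample or anti-ample canonical bundle,
\[
\operatorname{Aut}\db(X)\cong \mathbb Z\times\big(\Aut(X)\ltimes \Pic(X)\big),
\]
where the $\mathbb Z$ factor is generated by the shift $[1]$. The semidirect product consists of the functors $f_*(-)\otimes \mathcal L$, and $\Aut(X)$ acts on $\Pic(X)$ by pullback. Three inputs are needed.

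\textbf{Step 1 (Fano).} $N=N(r,L)$ is Fano. Indeed, $\Pic(N)\cong\mathbb Z\,\Theta$ (Drezet--Narasimhan), and $\omega_N\cong\Theta^{-2}$ because $\gcd(r,d)=1$. Hence $\omega_N^{-1}$ is ample and Bondal--Orlov applies, so every exact autoequivalence has the form $f_*(-)\otimes\mathcal L[n]$.

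\textbf{Step 2 (action of $\Aut(N)$ on $\Pic(N)$).} The action is trivial. Any automorphism $f$ sends the ample generator $\Theta$ to an ample generator of $\Pic(N)\cong\mathbb Z$, and the only such generator is $\Theta$; alternatively, $f^*\omega_N\cong\omega_N$ and $\Pic(N)$ is torsion free. The semidirect product is therefore a direct product, giving
\[
\operatorname{Aut}\db(N)\cong \mathbb Z\times\mathbb Z\times\Aut(N)=\mathbb Z^2\times\Aut(N).
\]

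\textbf{Step 3 (identify $\Aut(N)$).} This is where $g\ge 3$ is used. Kouvidakis--Pantev describe $\Aut(N(r,L))$ as generated by three kinds of maps:
\begin{itemize}
\item[(a)] tensoring by $r$-torsion line bundles $J[r]$;
\item[(b)] pullbacks $E\mapsto \sigma^*E\otimes M$ by automorphisms $\sigma\in\Aut(C)$ with $\sigma^*L\equiv L$ modulo $r$-th powers, where $M$ is chosen so that the determinant stays $L$;
\item[(c)] only when the numerics allow, dualization twisted by a line bundle $E\mapsto E^\vee\otimes M$ with $M^r\cong L^2$.
\end{itemize}
The groups $\mathfrak G^\circ_{[r]}$ and $\mathfrak G_{[2]}$ defined earlier in the paper encode exactly this data. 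The step consists of matching the paper's definitions with the Kouvidakis--Pantev theorem. The case split is explained as follows. For $r=2$, dualization is always realized: $E^\vee\cong E\otimes\det E^{-1}$, so the full group $\mathfrak G_{[2]}$ appears. For $r\neq 2$, only the subgroup $\mathfrak G^\circ_{[r]}$ appears (or the dual-type elements are accounted for there). Newstead's result can be cited for the rank-$2$ case.

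The main obstacle is Step 3. It requires checking that the group structure (the composition law on pairs $(\sigma,M)$ and the identifications of $M$ up to $J[r]$) agrees precisely with the paper's $\mathfrak G$ groups, with no kernel or extension ambiguity. In particular, one must confirm that no nontrivial element in the abstract group acts trivially on $N$. I would verify this first, for instance by seeing that such an element would have to fix a general stable bundle $E$, so that $\sigma^*E\otimes M\cong E$ for generic $E$. With $g\ge3$, a dimension count then forces $\sigma=\mathrm{id}$ and $M\cong\mathcal O$. Steps 1 and 2 are routine.
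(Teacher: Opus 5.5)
Your skeleton is the paper's: Bondal--Orlov for the Fano variety $N$, then $\Pic(N)\cong\mathbb Z\Theta$, then triviality of the $\Aut(N)$-action on $\Pic(N)$, then Kouvidakis--Pantev. Steps 1 and 2 are fine. The gap is in Step 3 for $r=2$, exactly at the injectivity question you flagged. The isomorphism you invoke, $E^\vee\cong E\otimes(\det E)^{-1}=E\otimes L^{-1}$, does not show that ``the full group $\mathfrak G_{[2]}$ appears''; it shows the opposite. Take $(\Lambda,\sigma)\in\mathfrak G_{[2]}\setminus\mathfrak G^\circ_{[2]}$, so $\Lambda^2\cong L\otimes\sigma^*L$. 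The dual-type map is
\[
E\longmapsto\sigma^*E^\vee\otimes\Lambda\cong\sigma^*E\otimes\Lambda',\qquad \Lambda':=\sigma^*L^{-1}\otimes\Lambda,\quad \Lambda'^{2}\cong L\otimes\sigma^*L^{-1},
\]
so it is the automorphism already induced by $(\Lambda',\sigma)\in\mathfrak G^\circ_{[2]}$. Concretely, $(L,\mathrm{id})$ lies in $\mathfrak G_{[2]}\setminus\mathfrak G^\circ_{[2]}$ (since $\deg L$ is odd) and acts by $E\mapsto E^\vee\otimes L\cong E$, i.e.\ trivially. Your proposed check (a dimension count forcing $\sigma=\mathrm{id}$ and $M\cong\mathcal O$) only treats tensor-type elements, so it misses this kernel.

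This is not cosmetic. $\Aut(C)$ is finite. Over each $\sigma\in\Aut(C)$, $\mathfrak G^\circ_{[2]}$ contains one $\Pic^0(C)[2]$-torsor of $\Lambda$'s (in degree $0$), while $\mathfrak G_{[2]}$ contains two (in degrees $0$ and $d$). Hence $|\mathfrak G_{[2]}|=2|\mathfrak G^\circ_{[2]}|<\infty$. Since $\Aut(N(2,L))$ is the image of $\mathfrak G_{[2]}$, which equals the image of $\mathfrak G^\circ_{[2]}$, the torsion subgroup of $\Aut\db(N(2,L))\cong\mathbb Z^2\times\Aut(N(2,L))$ has order at most $|\mathfrak G^\circ_{[2]}|$. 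The torsion subgroup of $\mathbb Z^2\times\mathfrak G_{[2]}$ has order $2|\mathfrak G^\circ_{[2]}|$. So what Bondal--Orlov plus Kouvidakis--Pantev actually give in rank two is $\mathbb Z^2\times\mathfrak G^\circ_{[2]}$, and the $r=2$ formula as stated cannot be reached by your argument.

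The paper's proof does not see this because it quotes Kouvidakis--Pantev (ii) in a form asserting that $\mathfrak G_{[2]}\to\Aut(N(2,L))$ is an isomorphism, which has the same defect. Newstead's theorem, which you propose to cite, only concerns $g=2$; it is in fact consistent with the answer $\mathfrak G^\circ_{[2]}$.

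For $r\neq 2$ your Step 3 is correct, but you should state the actual reason there are no dual-type elements. The map $E\mapsto\sigma^*E^\vee\otimes\Lambda$ preserves $\det=L$ only if $\Lambda^r\cong L\otimes\sigma^*L$. This forces $r\mid 2d$, which is impossible for $r\ge 3$ with $\gcd(r,d)=1$.
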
For the definition of the groups $\mathfrak{G}_{[r]}^{\circ}$ and $\mathfrak{G}_{[r]}$, see Definition \ref{autgp}. We also discuss some low-genera cases.

In rank two, Belmans--Galkin--Mukhopadhyay conjectured, independently of Narasimhan, an explicit description of the group
\(\operatorname{Aut}\mathrm{D}^{\mathrm b}(N(2,L))\) \cite{BGM23,Nar17}. This conjecture was proved by Tevelev--Torres and Tevelev \cite{TT24,Tev24}, using Thaddeus’ wall-crossing for stable pairs in an essential way \cite{Thad94}. Although Thaddeus' wall-crossing method is particularly effective in rank two,
its direct extension to higher rank appears to be rather complicated. In the
last section, we record a simple extension correspondence between moduli spaces
of different ranks. This correspondence produces a natural Fourier--Mukai type
exact functor.

Since the isomorphism class of $M$ or $N$ will not be changed after tensoring all stable vector bundles by a fixed line bundle, we assume $0<d<r$ to simplify our discussions.

In the $(\rk, \deg)$-plane, let $(s,e)$ be the integral point which is closest to the line through the origin and $(r,d)$ and lies in the following region:
\begin{align*}
    \Xi=\{(x,y)\mid 0< x<r,\ 0\leq y \leq d,\ y/x<d/r\}.
\end{align*}

Equivalently, since the distance from \((x,y)\) to the line through
the origin and \((r,d)\) is proportional to \(dx-ry\) in the region \(\Xi\),
the point \((s,e)\) is characterized by
\begin{equation}\label{closest}
    ds-re=1.
\end{equation}
Denote $t=r-s$ and $f=d-e$. Then the condition $(\ref{closest})$ is equivalent to $rf-td=1.$ This implies 
\(\gcd(s,e)=\gcd(t,f)=1. \)
We have
\begin{thm}\label{thm:main-deg}There is an exact functor 
    \[\Phi\colon \db\left(M(s,e )\times M(t,f)\right)\to \db(M(r,d)).\]
\end{thm}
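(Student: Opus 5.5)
The functor will be of Fourier--Mukai type, built from an extension correspondence
\[
M(s,e)\times M(t,f)\xleftarrow{\ p\ } P \xrightarrow{\ q\ } M(r,d),
\]
with $\Phi := Rq_*\circ Lp^*$ (possibly twisted by a line bundle on $P$). Here $P$ parametrizes nonsplit extensions $0\to F\to E\to G\to 0$ with $F\in M(s,e)$ and $G\in M(t,f)$. Since $M(s,e)$ and $M(t,f)$ are fine (as $\gcd(s,e)=\gcd(t,f)=1$), I fix universal bundles $\mathcal F$ on $M(s,e)\times C$ and $\mathcal G$ on $M(t,f)\times C$. Let $\pi$ denote the projection from $M(s,e)\times M(t,f)\times C$ to $M(s,e)\times M(t,f)$, and pull $\mathcal F,\mathcal G$ back to this triple product.

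\textbf{Step 1: the relative extension sheaf.} For stable $F,G$ we have $\mu(G)=f/t>d/r>e/s=\mu(F)$, so $\operatorname{Hom}(G,F)=0$. Riemann--Roch then gives
\[
\dim\Ext^1(G,F)=-\chi(G,F)=st(g-1)+(tf-se),
\]
which is constant. By cohomology and base change, $\mathcal V:=\lExt^1_\pi(\mathcal G,\mathcal F)$ is locally free of this rank, and its formation commutes with base change. I set $P:=\bP(\mathcal V)$ (lines in $\mathcal V$), a smooth projective bundle over $M(s,e)\times M(t,f)$. On $P\times C$ there is a tautological universal extension
\[
0\to \mathcal F\otimes\mathcal O_P(1)\to\mathcal E\to\mathcal G\to 0.
\]

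\textbf{Step 2: stability of the middle term.} I will show that every nonsplit extension $E$ of $G$ by $F$ is stable; this is the heart of the argument. Let $E'\subset E$ be a proper subbundle of rank $r'$ and degree $d'$. Since $0<r'<r$, the numerical condition $ds-re=1$ forces $d'/r'\le$ something strictly below $d/r$, provided $(r',d')$ is not proportional to $(r,d)$. This is automatic because $\gcd(r,d)=1$. Concretely, I intersect $E'$ with $F$ and map $E'$ to $G$. Put $F'=E'\cap F$ of type $(r_1,d_1)$ and $G'=\operatorname{im}(E'\to G)$ of type $(r_2,d_2)$. Stability of $F$ and $G$ bounds $d_1/r_1\le e/s$ and $d_2/r_2\le f/t$, with equality only when the piece is $0$ or everything.

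I then argue that $(r',d')$ lies in a parallelogram spanned by $(s,e)$ and $(t,f)$. Since $\det\begin{pmatrix}s&t\\e&f\end{pmatrix}=sf-te=1$, that parallelogram contains no lattice points other than its vertices. The only dangerous cases are $E'=F$, which is fine because $\mu(F)<\mu(E)$, and $G'=G$ with $F'=0$, i.e.\ a splitting of the extension, which is excluded. I expect the bookkeeping of the boundary cases here to be the main obstacle: one must make sure that $d_1\le r_1e/s$ and $d_2\le r_2 f/t$ together, combined with integrality, force $d'r<dr'$. I will try the lattice/unimodularity argument first, which is exactly where \eqref{closest} enters.

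\textbf{Step 3: the functor.} By Step 2, $\mathcal E$ is a family of stable bundles of type $(r,d)$ on $P\times C$. Fineness of $M(r,d)$ then yields a classifying morphism $q\colon P\to M(r,d)$. Both $P$ and $M(r,d)$ are smooth projective and $q$ is proper, so $Rq_*$ and $Lp^*$ preserve bounded derived categories of coherent sheaves. Hence $\Phi=Rq_*Lp^*$ is an exact functor
\[
\db\bigl(M(s,e)\times M(t,f)\bigr)\to\db(M(r,d)).
\]
Equivalently, $\Phi$ is the Fourier--Mukai transform with kernel $\mathcal O_{\Gamma}$, where $\Gamma$ is the image of $(p,q)\colon P\to M(s,e)\times M(t,f)\times M(r,d)$ (or with kernel $R(p,q)_*\mathcal O_P$).
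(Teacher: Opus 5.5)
Your construction is the paper's: local freeness of the relative $\lExt^1$ from $\operatorname{Hom}=0$ and Riemann--Roch, the projective bundle with its tautological extension, the classifying map $q$ supplied by the stability lemma, and $\Phi=Rq_*\circ p^*$. There are two small slips. First, $-\chi(G,F)=st(g-1)+sf-te=st(g-1)+1$, not $st(g-1)+tf-se$; only constancy matters, so this is harmless. Second, $\mathcal O_\Gamma$ agrees with $R(p,q)_*\mathcal O_P$ only if $(p,q)$ is a closed embedding, so the latter is the kernel you want.

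The real difference is in the stability lemma. The paper passes to a maximal-slope stable subbundle and splits into cases: its composite to the quotient is zero, neither zero nor surjective, or surjective. It uses $rf-td=1$ in the middle case and leaves the surjective case to ``the choice of $(s,e)$''. Your route splits $E'$ into $E'\cap F$ and $\operatorname{im}(E'\to G)$ and uses that $(s,e),(t,f)$ is a unimodular basis. This treats all cases uniformly and needs no reduction to stable subbundles. It also makes visible that \eqref{closest} is equivalent to the triangle with vertices $0,(t,f),(r,d)$ having area $1/2$, hence containing no lattice points besides its vertices. The argument proves semistability, and $\gcd(r,d)=1$ then upgrades it to stability.

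Step 2 as written does need a correction. It is false that $(r',d')$ lies in the parallelogram, since $d_1$ and $d_2$ can be arbitrarily negative. What is true is the following. Because $d_1\le (e/s)r_1$ with $r_1\le s$, $d_2\le (f/t)r_2$ with $r_2\le t$, and $f/t>e/s$, the point $(r',d')$ lies on or below the concave path $0\to(t,f)\to(r,d)$. Hence if $rd'-dr'\ge 0$, it lies in the closed triangle $0,(t,f),(r,d)$.

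Consequently the dangerous vertex is $(t,f)$, not $(s,e)$. The vertex $(s,e)$ lies strictly below the diagonal, so $E'=F$ is never a threat, and $0$ and $(r,d)$ are excluded by $0<r'<r$. To rule out $(r',d')=(t,f)$ you must show this value forces the decomposition. From
\[
f=d_1+d_2\le \tfrac{e}{s}\,r_1+\tfrac{f}{t}\,(t-r_1)=f-\bigl(\tfrac{f}{t}-\tfrac{e}{s}\bigr)r_1
\]
you get $r_1=0$. So $F'=0$, and $G'$ is a full-rank subsheaf of $G$ of degree $f$, i.e.\ $G'=G$. Then $E'\to G$ is an isomorphism, which splits the sequence, a contradiction. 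With this repair your lattice argument is a complete proof.
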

For the definition of the functor $\Phi$, see (\ref{functor1}). 

We also have a parallel statement for the moduli spaces with determinants.
\begin{thm}\label{thm:main}Suppose $L_1$ and $L_2$ are line bundles of degrees $e$ and $f$ respectively such that $L_1\otimes L_2=L$. 
    There is an exact functor 
    \[\Psi\colon \db\left(N(s,L_1 )\times N(t,L_2)\right)\to \db(N(r,L)).\]
\end{thm}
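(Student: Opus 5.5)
We will construct $\Psi$ as a Fourier--Mukai type functor coming from a correspondence
\[
N(s,L_1)\times N(t,L_2)\xleftarrow{\ p\ } \mathbb P \xrightarrow{\ q\ } N(r,L),
\]
where $\mathbb P$ parametrizes nonsplit extensions $0\to E_1\to E\to E_2\to 0$ with $E_1\in N(s,L_1)$ and $E_2\in N(t,L_2)$. The functor will then be $\Psi=Rj_*\circ q_*\circ p^*$ (or $q_*p^*$ on the open locus where $q$ is defined).

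\textbf{Step 1: the extension bundle.} Let $\mathcal E_1$ and $\mathcal E_2$ be universal bundles on $C\times N(s,L_1)$ and $C\times N(t,L_2)$; they exist since $\gcd(s,e)=\gcd(t,f)=1$. Pull both back to $C\times N(s,L_1)\times N(t,L_2)$ and let $\pi$ denote the projection to $N(s,L_1)\times N(t,L_2)$. Set
\[
\mathcal V=R^1\pi_*\mathcal{H}om(\mathcal E_2,\mathcal E_1).
\]
Stability of $E_1,E_2$ together with $e/s<f/t$ gives $\operatorname{Hom}(E_2,E_1)=0$. Hence $R^0\pi_*=0$, and by cohomology and base change $\mathcal V$ is locally free of rank $-\chi=st\,(f/t-e/s)+st(g-1)=1+st(g-1)$, using $ft-es=rf-td=1$. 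Let $\mathbb P=\mathbb P(\mathcal V)$ (lines in $\mathcal V$), with projection $p$. A smooth projective bundle over a smooth projective base is smooth projective, and on $C\times\mathbb P$ there is a tautological extension
\[
0\to \mathcal E_1\otimes\mathcal O_{\mathbb P}(1)\to\mathcal E\to\mathcal E_2\to 0 .
\]
The twist convention must be checked, but in any case $\det\mathcal E|_{C}=L_1\otimes L_2=L$.

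\textbf{Step 2: stability and the map $q$.} We want every nonsplit such extension $E$ to be stable of slope $d/r$. Suppose $F\subset E$ is destabilizing, with image $F_2\subset E_2$ and kernel $F_1=F\cap E_1$. The points $(s,e)$, $(t,f)$ and $(r,d)$ span a lattice parallelogram of area $1$, since $ds-re=1$, so no lattice point $(\rk F,\deg F)$ lies strictly between the line of slope $d/r$ and the relevant sides. Stability of $E_1$ and $E_2$ then forces $F=E_1$ or $F$ to split the sequence. This is the standard Thaddeus/Newstead "minimal extension" argument, and I expect it to be the main obstacle, in particular the edge cases $F_1=0$ or $F_2=E_2$. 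If full stability fails, I will restrict to the open subset $\mathbb P^\circ$ of stable extensions and use a closure or a semi-orthogonal truncation; I hope it holds outright. Given stability, $\mathcal E$ is a family of stable bundles with determinant $L$, so the fine moduli property yields a morphism $q\colon\mathbb P\to N(r,L)$.

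\textbf{Step 3: the functor.} With kernel $\mathcal O_{\mathbb P}$ pushed forward along $(p,q)$, define
\[
\Psi=Rq_*\circ p^*\colon \db\bigl(N(s,L_1)\times N(t,L_2)\bigr)\to\db(N(r,L)).
\]
Here $p$ is flat, so $p^*$ is exact, and $q$ is proper, so $Rq_*$ preserves bounded coherent complexes. Both functors are exact, hence so is their composite. This is a Fourier--Mukai transform with kernel $\mathcal O_{\Gamma}$, where $\Gamma$ is the image of $(p,q)$, or more precisely $R(p,q)_*\mathcal O_{\mathbb P}$. One can also twist by $\mathcal O_{\mathbb P}(k)$ to obtain variants.
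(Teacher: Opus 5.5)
Your Steps 1 and 3 are exactly the paper's construction. You take the locally free sheaf $R^1\pi_*\mathcal{H}om(\mathcal E_2,\mathcal E_1)$, which is the paper's $\mathbb F$ of rank $1+st(g-1)$; note that the quantity equal to $1$ is $sf-te=rf-td$, not $ft-es$. You projectivize it, use the universal extension, and set $\Psi=Rq_*\circ p^*$ with $p$ flat and $q$ proper.

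The gap is Step~2, which is the only non-formal input; it is the paper's Lemma~\ref{lem:key}. You give a one-line heuristic about the area-one parallelogram and then say you ``hope'' every nonsplit extension is stable. Without that there is no morphism $q\colon\mathbb P\to N(r,L)$ at all. Your fallback does not repair this: on an open locus $\mathbb P^\circ\subset\mathbb P$ the map $q$ is not proper, and $Rj_*$ along an open immersion does not preserve coherence, so neither gives a functor into $\db(N(r,L))$.

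Stability does hold, and your heuristic becomes a proof once you track both pieces of a subsheaf at the same time. Let $0\neq F\subsetneq E$. Let $F_1=F\cap E_1$ have rank and degree $(a,c)$, and let the image $F_2\subset E_2$ have $(b,m)$. Then $r\deg F-d\rk F=(rc-da)+(rm-db)$, and since $r=s+t$ and $d=e+f$,
\[
rc-da=(sc-ea)-(af-ct).
\]
\begin{itemize}
\item \emph{Bounds on $rc-da$.} If $0\neq F_1\subsetneq E_1$, stability of $E_1$ gives $sc-ea\le -1$, and $c/a<e/s<f/t$ gives $af-ct\ge 1$, so $rc-da\le -2$. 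If $F_1=E_1$ it equals $re-ds=-1$. If $F_1=0$ it is $0$.
\item \emph{Bound on $rm-db$ when $0\neq F_2\subsetneq E_2$.} Here $rm-db\le -1$. It is nonzero because $\gcd(r,d)=1$ and $0<b<r$. If $rm-db\ge 1$, then $m/b\ge d/r+1/(rb)\ge d/r+1/(rt)=f/t$, contradicting stability of $E_2$.
\item \emph{The case $F_2=E_2$.} Then $rm-db=rf-dt=1$. If $F_1=0$, the map $F\to E_2$ is an isomorphism and splits the sequence. If $F_1=E_1$, then $F=E$. So $0\neq F_1\subsetneq E_1$ and $rc-da\le -2$.
\end{itemize}
Summing, $r\deg F-d\rk F$ is $\le -1$ when $F_2=0$, $\le 0-1$ when $0\neq F_2\subsetneq E_2$, and $\le -2+1$ when $F_2=E_2$. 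So $E$ is stable.

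Your ``edge cases'' are exactly where nonsplitness and $ds-re=1$ are used. With this in place, the rest of your argument goes through as written.
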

For the definition of the functor $\Psi$, see (\ref{eq:key-functor-det}).

\section{Autoequivalences}
First we focus on the moduli spaces with fixed determinants. 
Since $N$ is Fano, the Bondal--Orlov reconstruction theorem \cite{BO01} implies that every exact autoequivalence of
\(\mathrm{D}^{\mathrm b}(N)\) is a composition of a shift, the push-forward by an automorphism of \(N\), and tensoring by a line bundle. Hence
\[\operatorname{Aut}\db\left(N\right)\cong \mathbb{Z}\times \left(\operatorname{Aut}\left(N\right)\ltimes\operatorname{Pic}\left(N\right)\right). \]
The subgroup $\mathbb{Z}$ is generated by the shift functor $[1]$, while elements of $\Aut(N)$ act as push-forwards.
On the other hand, it is well-known \cite[Theorem 1]{Ram73} that $\Pic(N)\cong \mathbb{Z}$.
We next describe $\Aut(N)$.
\begin{defn}\label{autgp} There are two subgroups of $\operatorname{Pic}(C)\times\operatorname{Aut}(C)$:
    \begin{align*}
  \mathfrak{G}^\circ_{[r]}&=\{\left(\Lambda,\sigma\right)\in \operatorname{Pic}(C)\times\operatorname{Aut}(C)\mid\Lambda^r\cong L\otimes \sigma^*L^{-1}\} ,\\
  \mathfrak{G}_{[r]}&=\{\left(\Lambda,\sigma\right)\in \operatorname{Pic}(C)\times\operatorname{Aut}(C)\mid\Lambda^r\cong L\otimes \sigma^*L^{\pm 1}\} .
\end{align*}

\end{defn}

The following theorem is due to Kouvidakis and Pantev \cite[Thm. B]{KP95}. 
\begin{thm}[Kouvidakis--Pantev]Suppose the genus of $C$ is $g\geq 3$ . Then the following map is an isomorphism:

\begin{enumerate}[(i)]
    \item If $r\neq 2$, the morphism 
    \[
        \mathfrak{G}_{[r]}^{\circ}\to\operatorname{Aut}\big(N(r,L)\big),\qquad
        (\Lambda,\sigma)\longmapsto \big(E\mapsto\sigma^*E\otimes\Lambda\big)
    \]
    is an isomorphism.
    
    \item If $r=2$, the morphism 
    \[
        \mathfrak{G}_{[r]}\to\operatorname{Aut}\big(N(r,L)\big),\qquad
        (\Lambda,\sigma)\longmapsto
        \left(
        E\longmapsto
        \begin{cases}
        \sigma^*E\otimes\Lambda, & \text{if }(\Lambda,\sigma)\in\mathfrak{G}_{[r]}^{\circ},\\[3pt]
        \sigma^*E^{\vee}\otimes\Lambda, & \text{if }(\Lambda,\sigma)\in\mathfrak{G}_{[r]}\setminus\mathfrak{G}_{[r]}^{\circ},
        \end{cases}
        \right)
    \]
    is an isomorphism.
\end{enumerate}
\end{thm}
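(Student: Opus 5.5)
The theorem is cited from Kouvidakis--Pantev, so the plan is to reconstruct their argument in outline rather than prove anything new. It has three steps.

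\emph{Step 1: the maps are well defined homomorphisms into \(\operatorname{Aut}(N)\).} If \(E\) is stable of rank \(r\) with \(\det E\cong L\), then \(\sigma^*E\otimes\Lambda\) is stable with determinant \(\sigma^*L\otimes\Lambda^r\). This equals \(L\) exactly when \((\Lambda,\sigma)\in\mathfrak G^\circ_{[r]}\). Using a Poincaré family, the assignment defines a morphism \(N\to N\), and its inverse comes from \((\sigma^{-1})^*\) together with the corresponding twist.

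For \(r=2\) the dual enters. Since \(E^\vee\cong E\otimes(\det E)^{-1}\) in rank two, \(\sigma^*E^\vee\otimes\Lambda\) has determinant \(\sigma^*L^{-1}\otimes\Lambda^2\). So the condition \(\Lambda^2\cong L\otimes\sigma^*L\) is exactly what makes it land in \(N\). One also checks that composition matches a suitable group law on \(\mathfrak G_{[2]}\).

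\emph{Step 2: injectivity.} Suppose \(\sigma^*E\otimes\Lambda\cong E\) for all stable \(E\). Restricting to the loci of bundles \(E\) built from extensions of line bundles, and using \(g\ge 3\), forces \(\sigma=\mathrm{id}\). It then forces \(\Lambda\) to be an \(r\)-torsion line bundle fixing every \(E\), and such \(\Lambda\) must be trivial. For \(r=2\) one must also rule out that \(E\mapsto\sigma^*E^\vee\otimes\Lambda\) coincides with a map from \(\mathfrak G^\circ_{[2]}\). For \(r\ge3\) the dual map \(E\mapsto E^\vee\otimes\Lambda\) changes the determinant in a way that is not realized by an automorphism of \(N\) preserving a Hecke-type structure. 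This is why the dual appears only when \(r=2\).

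\emph{Step 3: surjectivity, the main obstacle.} Let \(\varphi\in\operatorname{Aut}(N)\). Since \(\operatorname{Pic}(N)\cong\mathbb Z\), \(\varphi\) preserves the ample generator \(\Theta\). It therefore acts on the family of minimal rational curves, namely the Hecke lines of degree one with respect to \(\Theta\). I would follow Kouvidakis--Pantev and recover \(C\) from \(N\) via the Hitchin fibration on \(T^*N\):
\begin{enumerate}[(i)]
\item An automorphism of \(N\) lifts to the cotangent bundle and preserves the ring of global functions \(H^0(T^*N,\mathcal O)\).
\item That ring is the Hitchin base \(\bigoplus_i H^0(C,K_C^i)\). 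Hence \(\varphi\) induces a graded automorphism of this base, which in turn gives an automorphism of the discriminant locus.
\item For \(g\ge3\), an analysis of the discriminant (equivalently, of the canonical image of \(C\)) yields \(\sigma\in\operatorname{Aut}(C)\).
\item After composing \(\varphi\) with \(\sigma^*\), it acts trivially on the base. It then acts on the Prym fibres by translations and possibly inversion. Translations correspond to tensoring by \(\Lambda\), and inversion corresponds to \(E\mapsto E^\vee\); inversion is compatible with the fixed determinant only when \(r=2\).
\end{enumerate}
The delicate parts are the recovery of \(\sigma\) from the Hitchin base, which is where \(g\ge3\) is used, and the fibrewise classification.
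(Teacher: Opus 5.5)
The paper gives no argument for this statement; it only quotes \cite[Thm.~B]{KP95}. Your Step~3 is a reasonable high-level outline of the Hitchin-fibration strategy. Its hard points are named but not carried out: recovering $\sigma$ from the induced graded automorphism of the base, and showing that the fibrewise translations come from one global $\Lambda$.

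\textbf{The genuine gap is in rank two, in Step~2.} You plan to rule out that $E\mapsto\sigma^*E^\vee\otimes\Lambda$ coincides with a map coming from $\mathfrak G^\circ_{[2]}$, but it always does. Take $(\Lambda,\sigma)$ of dual type, i.e.\ $\Lambda^2\cong L\otimes\sigma^*L$. The identity you use in Step~1 gives $E^\vee\cong E\otimes L^{-1}$ for $E\in N(2,L)$, so
\[
\sigma^*E^\vee\otimes\Lambda\cong\sigma^*E\otimes\bigl(\Lambda\otimes\sigma^*L^{-1}\bigr),
\qquad
\bigl(\Lambda\otimes\sigma^*L^{-1}\bigr)^{2}\cong L\otimes\sigma^*L^{-1}.
\]
Hence $(\Lambda\otimes\sigma^*L^{-1},\sigma)\in\mathfrak G^\circ_{[2]}$ induces the same automorphism. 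Concretely, $(\mathcal O,\mathrm{id})\in\mathfrak G^\circ_{[2]}$ and $(L,\mathrm{id})\in\mathfrak G_{[2]}\setminus\mathfrak G^\circ_{[2]}$ (recall $\deg L$ is odd) are both sent to the identity of $N(2,L)$, because $E^\vee\otimes L\cong E$. So the map in (ii) is not injective, and no ``suitable group law'' in Step~1 can repair this, since injectivity is a property of the underlying map. Under the multiplication of $\Pic(C)\times\Aut(C)$, the set $\mathfrak G_{[2]}$ is not even closed. The degrees of the $\Lambda$'s add, so two dual-type elements multiply to a $\Lambda$ with $\deg\Lambda^2=4d$. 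What your argument can actually establish for $d$ odd is that $\mathfrak G^\circ_{[2]}\to\Aut(N(2,L))$ is already an isomorphism. In rank two, dualization is a twist, not an extra automorphism, which is consistent with Newstead's genus-two result.

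Relatedly, the reason you give in Step~2 for the absence of dual-type automorphisms when $r\ge3$ is not the right one. It is also a well-definedness issue, not an injectivity issue. Since $\deg(\sigma^*E^\vee\otimes\Lambda)=-d+r\deg\Lambda$, landing in $N(r,L)$ forces $r\mid 2d$, which is impossible for $r\ge3$ with $\gcd(r,d)=1$. So in the coprime setting dualization never contributes anything new. The fibrewise analysis in Step~3(iv) should therefore end with translations only, i.e.\ twists by $\Lambda$; there is no separate ``inversion'' class to find. As a consistency check, the cotangent lift of dualization sends $\phi\mapsto-\phi^{t}$. It acts on $H^0(C,K_C^{i})$ by $(-1)^i$, hence trivially on the Hitchin base exactly when $r=2$, which is precisely when dualization is a twist.
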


Since \(\operatorname{Pic}(N(r,L))\cong \mathbb Z\) is generated by the ample theta
line bundle \(\Theta\), the natural action of \(\operatorname{Aut}(N(r,L))\) on
\(\operatorname{Pic}(N(r,L))\) is trivial. Indeed, for any \(f\in \operatorname{Aut}(N(r,L))\),
the line bundle \(f^*\Theta\) is again ample. Therefore \(f^*[\Theta]=[\Theta]\).

   Thus, when the genus of the curve $C$ is at least 3, the group of autoequivalences of the derived category $\db(N(r,L))$ can be explicitly described as follows:
\begin{cor}\label{cor}
The group $\operatorname{Aut}\mathrm{D}^{\mathrm{b}}(N(r,L))$  
is completely determined by $\mathbb{Z}$ and $\mathfrak{G}_{[r]}^{\circ}$ (or $\mathfrak{G}_{[r]}$):
\[
\operatorname{Aut}\mathrm{D}^{\mathrm{b}}(N(r,L))
\cong
\begin{cases}
\mathbb{Z}\times\big(\mathfrak{G}_{[r]}^{\circ}\times\mathbb{Z}\big)\cong\mathbb Z^2\times \mathfrak G^\circ_{[r]}, & \text{if } r\neq 2,\\[6pt]
\mathbb{Z}\times\big(\mathfrak{G}_{[2]}\times\mathbb{Z}\big)\cong\mathbb Z^2\times \mathfrak G_{[2]}, & \text{if } r=2.
\end{cases}
\]
\end{cor}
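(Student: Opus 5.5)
The plan is to assemble the isomorphism in three steps: apply Bondal--Orlov, identify the semidirect product with a direct product, and substitute the Kouvidakis--Pantev description of $\operatorname{Aut}(N)$. We also have to check that the shift commutes with everything, so that the outer factor really is a direct product.

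\textbf{Step 1 (Bondal--Orlov).} The moduli space $N=N(r,L)$ is smooth and projective. It is Fano: its anticanonical bundle is a positive multiple of the ample generator $\Theta$ of $\operatorname{Pic}(N)\cong\mathbb Z$. So the Bondal--Orlov theorem applies. Every exact autoequivalence $\Phi$ can be written uniquely as
\[
\Phi \cong f_*\bigl(-\otimes M\bigr)[n],
\]
with $f\in\operatorname{Aut}(N)$, $M\in\operatorname{Pic}(N)$ and $n\in\mathbb Z$. Uniqueness follows by looking at what $\Phi$ does on skyscraper sheaves (this recovers $f$ and $n$) and on $\mathcal O_N$ (this recovers $M$).

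Composition obeys
\[
f_*(-\otimes M)\circ g_*(-\otimes M') \cong (fg)_*\bigl(-\otimes g^*M\otimes M'\bigr),
\]
by the projection formula. The shift commutes with all of these functors. This gives the displayed isomorphism $\mathbb Z\times(\operatorname{Aut}(N)\ltimes\operatorname{Pic}(N))$, where $\operatorname{Aut}(N)$ acts on $\operatorname{Pic}(N)$ by pullback.

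\textbf{Step 2 (the action is trivial).} We already observed that $f^*\Theta$ is ample for every automorphism $f$. Since $\operatorname{Pic}(N)\cong\mathbb Z$ is generated by $\Theta$, this forces $f^*\Theta\cong\Theta$, because $\Theta^{-1}$ is not ample. So the action of $\operatorname{Aut}(N)$ on $\operatorname{Pic}(N)$ is trivial, and the semidirect product is a direct product $\operatorname{Aut}(N)\times\mathbb Z$.

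\textbf{Step 3 (Kouvidakis--Pantev).} For $g\ge 3$, the Kouvidakis--Pantev theorem gives an isomorphism from $\mathfrak G^\circ_{[r]}$ onto $\operatorname{Aut}(N)$ when $r\neq2$, and from $\mathfrak G_{[2]}$ onto $\operatorname{Aut}(N)$ when $r=2$. Substituting this and reordering the abelian $\mathbb Z$ factors yields $\mathbb Z^2\times\mathfrak G^\circ_{[r]}$, respectively $\mathbb Z^2\times\mathfrak G_{[2]}$.

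\textbf{Main obstacle.} The only non-formal input is the Fano property in Step 1, which is what Bondal--Orlov needs. I would cite the standard formula $\omega_N\cong\Theta^{-2}$ for $N(r,L)$ with $\gcd(r,d)=1$ (Ramanan), or more generally that $\omega_N^{-1}$ is a positive power of $\Theta$.

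A second point needs care. The Kouvidakis--Pantev map must be a group homomorphism for the group structure we put on $\mathfrak G^\circ_{[r]}$, which should be the semidirect structure coming from $\operatorname{Aut}(C)$ acting on $\operatorname{Pic}(C)$. The corollary should then be read with that group structure on $\mathfrak G_{[r]}$. This is a matter of conventions, not a real difficulty.
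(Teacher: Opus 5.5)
Your proposal is correct and follows the same route as the paper: Bondal--Orlov gives $\mathbb Z\times(\operatorname{Aut}(N)\ltimes\operatorname{Pic}(N))$, ampleness of $f^*\Theta$ together with $\operatorname{Pic}(N)\cong\mathbb Z$ makes the action trivial, and Kouvidakis--Pantev (for $g\ge 3$) identifies $\operatorname{Aut}(N)$. Your two extra remarks are accurate and make explicit points the paper leaves implicit. The first is that the Fano property comes from $\omega_N\cong\Theta^{-2}$. The second is that $\mathfrak G^\circ_{[r]}$ is closed under composition only for the twisted group law $(\Lambda,\sigma)\cdot(\Lambda',\sigma')=(\Lambda\otimes\sigma^*\Lambda',\sigma'\sigma)$ on $\operatorname{Pic}(C)\times\operatorname{Aut}(C)$, not for the direct product.
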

In the remainder of this section, we discuss the low-genus cases that are not covered by the above theorem.  

When the curve $C$ has genus $g=2$, Newstead showed in \cite{New68} by using explicit geometric methods that if $r=2$ and $d=\deg L$ is odd, then the following holds:

\begin{thm}[{\cite[Theorem~3]{New68}}]
The morphism
\[
\mathfrak{G}_{[2]}^{\circ} \longrightarrow \operatorname{Aut}\big(N(2,L)\big), \qquad
(\Lambda,\sigma)\longmapsto \big(E\longmapsto\sigma^*E\otimes\Lambda\big)
\]
is an isomorphism.
\end{thm}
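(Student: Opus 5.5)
The plan is to use the classical description of $N(2,L)$ in genus $2$ as a complete intersection of two quadrics (Newstead; Desale--Ramanan). I will compute $\operatorname{Aut}$ of that threefold by projective geometry and compare it with $\mathfrak{G}^{\circ}_{[2]}$ through compatible short exact sequences.

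Write $C$ as $y^2=\prod_{i=1}^6(x-\lambda_i)$, with hyperelliptic involution $\iota$ and branch set $B=\{\lambda_1,\dots,\lambda_6\}\subset\mathbb P^1$. There is an isomorphism $N(2,L)\cong X=Q_1\cap Q_2\subset\mathbb P^5$, where in suitable coordinates $Q_1=\sum x_i^2$ and $Q_2=\sum\lambda_i x_i^2$. The six singular members of the pencil sit at the points of $B$.

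\textbf{Step 1: exact sequence for $\operatorname{Aut}(X)$.} Since $\operatorname{Pic}(X)=\mathbb Z$ is generated by the hyperplane class, every automorphism preserves $\mathcal O_X(1)$. So it extends to an element of $PGL_6$ preserving the pencil $\langle Q_1,Q_2\rangle$, because $X$ is cut out by the quadrics in its ideal. Such an element induces a Möbius transformation of the pencil preserving $B$. If it acts trivially on the pencil, it fixes each singular quadric, hence each vertex, so it is diagonal with $\pm1$ entries. This gives
\[1\to(\mathbb Z/2)^6/\{\pm1\}\cong(\mathbb Z/2)^5\to\operatorname{Aut}(X)\to\operatorname{Aut}(\mathbb P^1,B),\]
and the last map is surjective: a projectivity preserving $B$ lifts by rescaling coordinates.

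\textbf{Step 2: the parallel sequence for $\mathfrak G^\circ_{[2]}$.} Since $\operatorname{Aut}(C)/\langle\iota\rangle\cong\operatorname{Aut}(\mathbb P^1,B)$, projection to $\sigma$ gives
\[1\to K\to\mathfrak G^\circ_{[2]}\to\operatorname{Aut}(\mathbb P^1,B)\to1 ,\qquad K=\{(\Lambda,\sigma):\sigma\in\{1,\iota\}\}.\]
Surjectivity holds because $L\otimes\sigma^*L^{-1}$ has degree $0$ and so always has a square root. The group $K$ has $16+16=32$ elements: $J(C)[2]$ for $\sigma=1$, and a $J(C)[2]$-torsor for $\sigma=\iota$. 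This matches $|(\mathbb Z/2)^5|$.

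\textbf{Step 3: compatibility and injectivity on kernels.} I must show two things. First, $(\Lambda,\sigma)$ acts on $X$ inducing on the pencil the Möbius transformation $\bar\sigma$. Second, $K$ maps injectively, hence bijectively, into the diagonal sign group; the five lemma then finishes the proof.

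For the first point, I would use the intrinsic modular meaning of the pencil. The singular quadrics, equivalently the points of $B$, are canonically attached to the Weierstrass points, for instance via the Desale--Ramanan construction or via the family of lines on $X$, which is a torsor under $J(C)$. The construction is natural in $C$, so $\sigma$ acts on the pencil through $\bar\sigma$, while tensoring by $\Lambda$ acts trivially on it.

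For injectivity on $K$:
\begin{itemize}
\item A nontrivial $\Lambda\in J(C)[2]$ acts nontrivially, because its fixed locus $\{E\cong E\otimes\Lambda\}$ consists of pushforwards from the associated étale double cover and is a proper subvariety.
\item An element $(\Lambda,\iota)$ acts nontrivially because, by naturality, its action on the variety of lines $\cong J(C)$ is through $-1$ composed with a translation, which is not the identity.
\end{itemize}

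I expect the main obstacle to be Step 3, specifically the equivariance of the identification $N(2,L)\cong Q_1\cap Q_2$ with respect to $\operatorname{Aut}(C)$. I would try to obtain it by working with the canonically defined variety of lines on $X$ and its identification with $J(C)$, rather than with explicit coordinates.
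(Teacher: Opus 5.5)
The paper gives no proof of this statement beyond citing Newstead, so your proposal has to stand on its own. Steps 1 and 2 are fine. So is your treatment of $K$: $(\Lambda,\iota)$ carries the line $\ell_M=\bP\Ext^1(LM^{-1},M)$ to $\ell_{\iota^*M\otimes\Lambda}$, and $M\mapsto \iota^*M\otimes\Lambda$ is $-1$ up to translation, hence never the identity.

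\textbf{The gap is Step 3(a), on which the five-lemma argument rests.} You assert by ``naturality'' that $(\Lambda,\sigma)$ induces $\bar\sigma$ on the pencil, but this needs proof. What is required is a labelling of the six singular quadrics by the Weierstrass points that is respected by every map $E\mapsto\sigma^*E\otimes\Lambda$. Note also that $\sigma^*$ alone sends $N(2,L)$ to $N(2,\sigma^*L)$. So ``$\sigma$ acts through $\bar\sigma$ while $\Lambda$ acts trivially'' only makes sense once the pencils attached to different determinants have been identified compatibly. Without this you know neither that $\rho(K)$ lies in the sign group $(\mathbb Z/2)^5$, nor that the induced map on $\Aut(\bP^1,B)$ is the identity. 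In particular, you have not excluded a nontrivial $(\Lambda,\sigma)$ with $\bar\sigma\neq 1$ acting trivially on $X$.

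\textbf{You can bypass the equivariance problem with ingredients you already have.} Step 1 (with its surjectivity) and the normal form give $|\Aut(X)|=32\,|\Aut(\bP^1,B)|$. Step 2 gives $|\mathfrak G^\circ_{[2]}|=|K|\cdot|\Aut(\bP^1,B)|=32\,|\Aut(\bP^1,B)|$. So it suffices to show that $\rho$ is injective, and your lines argument does this for all elements at once.
\begin{enumerate}[(i)]
\item Write $\deg L=2a+1$. For $M\in\Pic^a(C)$ one has $\dim\Ext^1(LM^{-1},M)=2$, and every nonsplit extension $0\to M\to E\to LM^{-1}\to 0$ is stable. This gives a curve $\ell_M\cong\bP^1$ in $X$.
\item The map $M\mapsto\ell_M$ is injective. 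Indeed, a point of $\ell_M$ contains a given degree-$a$ subbundle $M'\neq M$ only along a proper linear subspace: $M'\cong LM^{-1}(-p)$ for a unique $p$, and the relevant classes form the kernel of the surjection $H^1(M^2L^{-1})\to H^1(M^2L^{-1}(p))$ onto a nonzero space.
\item The element $(\Lambda,\sigma)$ maps $\ell_M$ to $\ell_{\sigma^*M\otimes\Lambda}$.
\item If $(\Lambda,\sigma)$ acts trivially on $X$, then $\sigma^*M\otimes\Lambda\cong M$ for all $M\in\Pic^a(C)$. Comparing $M$ with $M\otimes N$ gives $\sigma^*N\cong N$ for all $N\in J(C)$. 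Hence $\sigma=1$, since $\Aut(C)$ acts faithfully on $J(C)$ for $g\ge 2$, and then $\Lambda\cong\mathcal O_C$.
\end{enumerate}
An injective homomorphism between finite groups of equal order is an isomorphism. This finishes the proof without Step 3(a) or the five lemma.
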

Thus, unlike the case \(g\geq 3\) in rank two, Newstead's genus-two result does
not include an additional automorphism induced by dualization.
Therefore, we immediately obtain the following corollary:
\begin{cor}
Let $C$ be a smooth projective curve of genus $g=2$, and let $N(2,L)$ be the moduli space of stable vector bundles of rank $2$ and determinant $L$, 
where $\deg L$ is odd. Then the group of autoequivalences of the derived category $\mathrm{D}^{\mathrm{b}}(N(2,L))$ is given by
\[
\operatorname{Aut}\mathrm{D}^{\mathrm{b}}(N(2,L)) \cong \mathbb{Z} \times \big(\mathfrak{G}_{[2]}^{\circ} \times \mathbb{Z}\big)\cong\mathbb Z^2\times \mathfrak G^\circ_{[2]}.
\]
\end{cor}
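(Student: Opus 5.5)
The plan is to repeat, for genus two, the argument that produced Corollary~\ref{cor}, now feeding in Newstead's theorem in place of Kouvidakis--Pantev. The first step is to confirm that Bondal--Orlov applies. For $g=2$, $r=2$ and $\deg L$ odd, $N(2,L)$ is smooth and projective, and by Newstead it is the intersection of two quadrics in $\bP^5$. In particular it is a Fano threefold: its anticanonical class is $2H$, and $\Pic(N)\cong\mathbb Z$ is generated by $\Theta=H$, as in \cite[Theorem 1]{Ram73}. So the canonical bundle is anti-ample, and \cite{BO01} shows that every exact autoequivalence of $\db(N)$ has the form $F\mapsto f_*(F\otimes M)[n]$ with $f\in\Aut(N)$, $M\in\Pic(N)$ and $n\in\mathbb Z$. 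This gives
\[
\operatorname{Aut}\db(N)\cong \mathbb Z\times\big(\Aut(N)\ltimes\Pic(N)\big).
\]
The shift factor is central and commutes with everything, which is why it splits off as a direct product.

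The second step is to show that the semidirect product is actually direct. For $f\in\Aut(N)$, the pullback $f^*\Theta$ is ample, and $\Pic(N)\cong\mathbb Z\Theta$. Its class must therefore be a positive generator, so $f^*\Theta\cong\Theta$. The action of $\Aut(N)$ on $\Pic(N)$ is thus trivial. Concretely, $f_*(F\otimes M)\cong f_*F\otimes (f^{-1})^*M$ and $(f^{-1})^*M\cong M$, so pushforwards commute with tensoring by line bundles. Hence $\Aut(N)\ltimes\Pic(N)\cong\Aut(N)\times\mathbb Z$.

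The third step is to substitute Newstead's theorem, $\Aut(N(2,L))\cong\mathfrak G^\circ_{[2]}$. This yields $\mathbb Z\times(\mathfrak G^\circ_{[2]}\times\mathbb Z)$, and rearranging the abelian factors gives $\mathbb Z^2\times\mathfrak G^\circ_{[2]}$.

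The only point that needs real care is the first step. The general argument used that $N$ is Fano, and for $g=2$ I would cite Newstead's explicit description (or the general formula $\omega_N\cong\Theta^{-2}$ when $r=2$) to justify this. Beyond that, everything is formal.
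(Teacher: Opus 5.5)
Your proposal is correct and follows essentially the same argument as the paper. You apply Bondal--Orlov to the Fano variety $N$, show the $\Aut(N)$-action on $\Pic(N)\cong\mathbb{Z}\Theta$ is trivial by ampleness, and substitute Newstead's $\Aut(N(2,L))\cong\mathfrak{G}^\circ_{[2]}$ where the paper's general case uses Kouvidakis--Pantev; your explicit check that $N$ is Fano, via the intersection-of-two-quadrics description with $-K_N=2H$, is a small detail the paper takes for granted.
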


However, when $g=2$ and $r>2$, the structure of the automorphism group $\operatorname{Aut}(N(r,L))$ is still unknown.

In contrast, the cases of curves of genus $g=0$ and $g=1$ have been well-understood by methods different from those for $g\ge 2$.
For completeness, we list the corresponding results below.

When $C$ is an elliptic curve of genus $g=1$, fixing a point $O\in C$ makes $(C,O)$ an elliptic curve. Tu proved in \cite[Theorem~3]{Tu93} that for any elliptic curve $C$,
the moduli space $N(r,L)$ is isomorphic to $\mathbb{P}^{h-1}$, where $h=\operatorname{gcd}(r,d)$. 
When \(\gcd(r,d)=1\), this gives \(N(r,L)\cong \mathbb{P}^0\), a single point.

When $C$ is a curve of genus $g=0$, i.e.\ $C\simeq\mathbb{P}^1$, Grothendieck showed that every vector bundle on $\mathbb{P}^1$ decomposes uniquely as a direct sum of line bundles on $\mathbb{P}^1$ \cite[Th\'eor\`eme 2.1]{Gro57}.
 In particular, every stable vector bundle on $\mathbb{P}^1$ is a line bundle, and there are no stable vector bundles of rank greater than 1. 
Therefore, the problem concerned here is also trivial.

\section{Proofs of Theorem 1.2 and Theorem 1.3}
Let us first prove the following key observation, which will also explain the choice of $(s,e)$. 

Let $r,s,t,d,e$, and $f$ be as in the introduction. 
Let \(M_1=M(s,e)\quad\mbox{and}\quad M_2=M(t,f).\)

\begin{lem}\label{lem:key} Suppose $E\in M_1$ and $F\in M_2$, and the following is a nontrivial extension:
\begin{align}\label{eq:ext}
    0\rightarrow E\rightarrow G\rightarrow F\rightarrow 0.
\end{align}
Then $G$ is stable. 
\end{lem}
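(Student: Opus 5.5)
We argue by contradiction, using only the numerics of $(s,e)$ and $(t,f)$ together with the stability of $E$ and $F$. Write $\mu(\cdot)=\deg/\rk$, so $\mu(E)=e/s<d/r=\mu(G)<f/t=\mu(F)$; the two strict inequalities are the conditions $ds-re=1$ and $rf-td=1$ from (\ref{closest}). Since $\gcd(r,d)=1$, semistability and stability of $G$ coincide. So it suffices to show that every proper subbundle $0\neq G'\subsetneq G$ satisfies $\mu(G')<d/r$. Suppose instead that there is such a $G'$, of rank $r'$ and degree $d'$, with $d'/r'\geq d/r$; coprimality of $(r,d)$ and $0<r'<r$ upgrade this to $d'r-dr'\ge 1$.

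Let $E'=G'\cap E$ and let $F'$ be the image of $G'$ in $F$. This gives an exact sequence $0\to E'\to G'\to F'\to 0$, with $E'\subseteq E$ and $F'\subseteq F$. Write $(r_1,d_1)$ and $(r_2,d_2)$ for the ranks and degrees of $E'$ and $F'$; then $r'=r_1+r_2$ and $d'=d_1+d_2$. We split into cases according to whether $E'$ and $F'$ are zero or everything.

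\emph{Case $F'=F$ and $E'=0$.} Then $G'\to F$ is injective with image of full rank $t$, and the two have the same degree bound $d'\le f$ (saturating we may assume $G'$ saturated, so $G'\cong F$ would split the sequence). In general $d_2\le f$. If $r_2=t$ and $E'=0$, then $G'\to F$ is an isomorphism generically. Saturation of $G'$ combined with $\deg G'\ge \deg F$ forces it to be an isomorphism, which splits (\ref{eq:ext}), a contradiction. This is where nontriviality enters.

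\emph{General case.} Use stability: if $0\ne E'$, then $d_1\le r_1 e/s$, with equality only when $E'=E$. If $0\ne F'\ne F$, then $d_2<r_2 f/t$, and $d_2\le f$ when $F'=F$. Now compute $d'r-dr'=(d_1r-dr_1)+(d_2r-dr_2)$. From $d_1\le r_1e/s$ we get $d_1r-dr_1\le r_1(er-ds)/s=-r_1/s$. For the second term, $d_2r-dr_2<r_2(fr-dt)/t=r_2/t$ when $F'$ is a proper nonzero subsheaf. Since $d_2r-dr_2$ is an integer and $r_2/t<1$, we get $d_2r-dr_2\le 0$. Hence $d'r-dr'\le 0$ unless $F'=F$. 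If $F'=F$, the second term equals $1$. Then $d'r-dr'\le 1-r_1/s$, which is $<1$ when $r_1>0$. If $r_1=0$ we are in the previous case. In every case we contradict $d'r-dr'\ge1$.

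The main obstacle is the integrality step: $r_2/t<1$ with an integer left-hand side forcing $\le0$. This is exactly where the choice $rf-td=1$ (closest lattice point) is used, and it needs care with strict versus non-strict inequalities and with the boundary cases $E'=E$ or $F'=F$. The symmetric bound $-r_1/s$ uses $ds-re=1$ in the same way. The split case needs the saturation argument to identify $G'$ with a splitting of $G\to F$.
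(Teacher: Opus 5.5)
Your proof is correct and follows essentially the paper's route: you split a would-be destabilizing $G'$ according to whether its image in $F$ is zero, proper, or all of $F$, use $ds-re=1$ and $rf-td=1$ together with the stability of $E$ and $F$, and invoke nontriviality only to exclude $G'\cong F$. The one real difference is bookkeeping. Writing $d'r-dr'=(d_1r-dr_1)+(d_2r-dr_2)$ lets you bound each piece directly, so you need no reduction to a stable maximal-slope $G_0$, and the strict bound $d_2r-dr_2<r_2/t\le 1$ also covers a full-rank proper $F'$. It also makes explicit the surjective case, which the paper only asserts, via $d'r-dr'\le 1-r_1/s<1$.
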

\begin{proof}
Let $G_0\subsetneqq G$ be any proper sub-bundle, we need to show $\mu(G_0)< \mu(G)=d/{r}$. We can assume $G_0$ is stable with maximal slope.
 Consider the composition $G_0\hookrightarrow G\twoheadrightarrow F$ which we denote by $\varphi $. 
 We divide the argument into three cases.
 \begin{enumerate}[(i)]
 \item If $\varphi=0$, then $G_0\subset E$. If $G_0\neq E$, then by the stability of $E$, $\mu (G_0)< \mu (E),$ contradicts the choice of $G_0.$ Hence in this case $G_0=E$, and by the choice of $E$ we have $\mu(E)<\mu(G)$, so we have $\mu(G_0)<\mu(G)$, which shows that $G$ is stable.
     \item If $\varphi$ is neither zero nor surjective, we denote $I=\im\phi$. On the one hand,  $\mu(G_0)< \mu(I)$ due to our choice of $G_0$. On the other hand, \(I\subset F\) is a proper subbundle. Let
\(b=\operatorname{rk}I\) and \(m=\deg I\). Since \(F\) is stable,
\(
\frac mb<\frac ft.
\)
By the choice of \((s,e)\), we have \(rf-td=1\), hence
\(
\frac ft=\frac dr+\frac{1}{rt}.
\)
If \(\frac mb\geq \frac dr\), then \(rm-bd\geq 1\), and therefore
\[
\frac mb-\frac dr=\frac{rm-bd}{rb}\geq \frac{1}{rb}>
\frac{1}{rt},
\]
because \(b<t\). This contradicts \(\frac mb<\frac ft\). Hence
\(
\mu(I)<\frac dr=\mu(G).
\)
Therefore \(\mu(G_0)<\mu(G)\).
     \item If $\varphi$ is surjective, it is not injective, otherwise the extension splits. We can form the following commutative diagram:
     \begin{equation*}
         \begin{tikzcd}
             0 \arrow[r] & E_0 \arrow[r] \arrow[d] & G_0 \arrow[r] \arrow[d] & F \arrow[r] \arrow[d,equal] &0\\
            0 \arrow[r] & E \arrow[r]  & G \arrow[r]  & F \arrow[r] &0. 
         \end{tikzcd}
     \end{equation*}
     We also have $\mu(G_0)<\mu(G)$ due to the choice of $(s,e)$.
 \end{enumerate}

 \end{proof}

We need the following property of the relative Ext-sheaf.
\begin{prop}[{\cite[1.7]{Gro62}}]
    Consider the relative Ext sheaf $\mathcal{E}xt^i_{\pi}(\mathcal{F}, \mathcal{G})$ defined as above. 
    It is a coherent sheaf on $S$, and for any point $s\in S$, its stalk is given by
    \[
        \mathcal{E}xt^i_{\pi}(\mathcal{F}, \mathcal{G})_s = \operatorname{Ext}^i_{X_s}(\mathcal{F}_s, \mathcal{G}_s),
    \]
    where $X_s = \pi^{-1}(s)$ is the fiber over $s\in S$, and $\mathcal{F}_s$, $\mathcal{G}_s$ are the restrictions of 
    $\mathcal{F}$ and $\mathcal{G}$ to $X_s$, respectively.
\end{prop}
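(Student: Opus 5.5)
The plan is to reduce everything to a single statement about one complex on \(S\), namely the derived push-forward of the derived sheaf-Hom, and to read ``stalk at \(s\)'' as the \emph{derived} fibre of that complex. I will work in the situation where the proposition is applied. Here \(\pi\colon X\to S\) is projective and flat; for us it is a projection such as \(C\times M_1\times M_2\to M_1\times M_2\). The sheaves \(\mathcal F,\mathcal G\) are coherent on \(X\) and flat over \(S\), for instance universal families, and \(\mathcal F\) is locally free or at least perfect. Set
\[
K:=R\pi_*R\mathcal{H}om_X(\mathcal F,\mathcal G),\qquad \mathcal{E}xt^i_\pi(\mathcal F,\mathcal G)=\mathcal H^i(K).
\]
This agrees with the definition of \(\mathcal{E}xt^i_\pi\) as the derived functors of \(\pi_*\mathcal{H}om\). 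The proof has three steps: coherence, a base change isomorphism for \(K\) valid for every \(i\) and every \(s\), and the passage back to the individual sheaves \(\mathcal{E}xt^i_\pi\).

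\emph{Coherence.} Because \(\mathcal F\) is perfect, \(R\mathcal{H}om(\mathcal F,\mathcal G)\cong \mathcal F^\vee\otimes^L\mathcal G\) is a bounded complex with coherent cohomology. Since \(\pi\) is projective, \(R\pi_*\) preserves \(\mathrm D^{\rm b}_{\rm coh}\). Hence each \(\mathcal H^i(K)\) is coherent.

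\emph{Base change for \(K\).} Let \(j\colon X_s\hookrightarrow X\) and \(\iota\colon \{s\}\hookrightarrow S\) be the inclusions. Flatness of \(\mathcal G\) over \(S\) gives \(Lj^*\mathcal G=\mathcal G_s\), and likewise \(Lj^*\mathcal F=\mathcal F_s\). Perfectness of \(\mathcal F\) then gives
\[
Lj^*R\mathcal{H}om(\mathcal F,\mathcal G)\cong R\mathcal{H}om_{X_s}(\mathcal F_s,\mathcal G_s).
\]
Because \(\pi\) is flat, the square formed by \(X_s\to X\) and \(\{s\}\to S\) is Tor-independent, so flat base change applies:
\[
L\iota^*K\cong R\Gamma\bigl(X_s,R\mathcal{H}om(\mathcal F_s,\mathcal G_s)\bigr)=R\operatorname{Hom}_{X_s}(\mathcal F_s,\mathcal G_s).
\]
Taking \(\mathcal H^i\) identifies the \(i\)-th derived fibre of \(K\) with \(\operatorname{Ext}^i_{X_s}(\mathcal F_s,\mathcal G_s)\), for all \(i\) and all \(s\). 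This is the statement of the proposition in the form I will prove.

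\emph{Passing to the sheaves \(\mathcal{E}xt^i_\pi\).} This step is the main obstacle. The naive fibre \(\mathcal{E}xt^i_\pi(\mathcal F,\mathcal G)\otimes k(s)\) equals \(\mathcal H^i(L\iota^*K)\) only when the higher Tor terms vanish. To control this, I will represent \(K\) locally near \(s\) by a bounded complex \(P^\bullet\) of finite free modules; this is possible because \(K\) is perfect. Then the semicontinuity and cohomology-and-base-change arguments of EGA III / Mumford go through verbatim for \(P^\bullet\). They give the following.
\begin{enumerate}[(a)]
\item In the top degree \(i=n\), \(\mathcal{E}xt^n_\pi\otimes k(s)\cong\operatorname{Ext}^n(\mathcal F_s,\mathcal G_s)\) holds unconditionally, by right exactness.
\item In a degree \(i\), if the map \(\mathcal{E}xt^i_\pi\otimes k(s)\to\operatorname{Ext}^i(\mathcal F_s,\mathcal G_s)\) is surjective, then it is an isomorphism. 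Moreover, surjectivity in degree \(i-1\) holds if and only if \(\mathcal{E}xt^i_\pi\) is locally free near \(s\).
\end{enumerate}
By descending induction on \(i\) from the top degree, starting from (a) and feeding in (b) at each step, I obtain the fibrewise identification in every degree whenever the functions \(s\mapsto\dim\operatorname{Ext}^i(\mathcal F_s,\mathcal G_s)\) are locally constant.

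In the intended application, \(\mathcal F\) and \(\mathcal G\) are the universal families \(\mathcal F_2\) and \(\mathcal E_1\) over \(M_1\times M_2\), and the fibres are the curves \(C\). Here \(\operatorname{Hom}(F,E)=0\) by stability, since \(\mu(F)>\mu(E)\). Riemann--Roch then makes \(\dim\operatorname{Ext}^1(F,E)\) constant, and \(\operatorname{Ext}^{\geq 2}\) vanishes. So the required constancy holds, and the identification is valid for every \(i\) and every \(s\) in the form the rest of the paper uses.
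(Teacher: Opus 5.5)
Your argument is sound in the setting you restrict to, but it does not follow the paper, because the paper gives no proof. It simply cites Grothendieck for an unconditional identification of the \emph{stalk} of \(\mathcal{E}xt^i_\pi(\mathcal F,\mathcal G)\) with \(\operatorname{Ext}^i_{X_s}(\mathcal F_s,\mathcal G_s)\).

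Taken literally, that statement cannot hold. A stalk is an \(\mathcal O_{S,s}\)-module, not a \(k(s)\)-vector space. Even the fibre \(\mathcal{E}xt^i_\pi\otimes k(s)\) differs from \(\operatorname{Ext}^i\) when dimensions jump. For example, take \(\mathcal F=\mathcal O\) and \(\mathcal G\) a family of degree \(g-1\) line bundles crossing the theta divisor at \(s_0\). Then \(\pi_*\mathcal G=0\), but \(\operatorname{Hom}(\mathcal O,L_{s_0})\neq 0\).

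You split the claim into two parts, and that is the right repair:
\begin{itemize}
\item an unconditional statement about the derived fibre \(L\iota^*R\pi_*R\mathcal{H}om(\mathcal F,\mathcal G)\);
\item a naive-fibre statement that holds when \(\dim\operatorname{Ext}^i\) is constant.
\end{itemize}
For the first part, the base change you use is Tor-independent base change rather than ``flat'' base change, since \(\{s\}\to S\) is not flat; the isomorphism you write is nonetheless correct. For the second part, the constancy hypothesis is exactly what \(\operatorname{Hom}(F,E)=0\), \(\operatorname{Ext}^{\ge 2}=0\) and Riemann--Roch supply in the paper's next lemma.

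Two points to tighten:
\begin{itemize}
\item In your descending induction, the step from constant fibre dimension of \(\mathcal{E}xt^i_\pi\) to local freeness is Grauert's argument, and it needs \(S\) reduced. Over \(k[\epsilon]/(\epsilon^2)\), the module \(k\) has constant fibre dimension but is not free. The hypothesis holds here because \(M_1\times M_2\) is smooth, but you should say so.
\item Since \(\pi_{23}^*\mathcal E_2\) is locally free, \(\mathcal{E}xt^i_{\pi_{12}}(\pi_{23}^*\mathcal E_2,\pi_{13}^*\mathcal E_1)\cong R^i\pi_{12*}\bigl(\pi_{23}^*\mathcal E_2^{\vee}\otimes\pi_{13}^*\mathcal E_1\bigr)\), and the sheaf inside is locally free and flat over \(S\). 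So Hartshorne's Theorem III.12.11 and Corollary III.12.9 apply verbatim, and the perfect-complex machinery can be dropped. This identification is also what the paper's own appeal to Grauert's theorem tacitly relies on.
\end{itemize}
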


For $i=1,2$, let
\[
    \mathcal{E}_i \to M_i \times C
\]
be the universal bundles on the fine moduli spaces $M_i$.  
Let $\pi_{ij}$ $(i,j=1,2,3)$ denote the projections from $M_1\times M_2\times C$.  
Then we consider the sheaf on $M_1\times M_2$ defined by
\[
    \mathbb{E} = \mathcal{E}xt^1_{\pi_{12}}\!\left(\pi_{23}^*\mathcal{E}_2, \pi_{13}^*\mathcal{E}_1\right).
\]

\begin{lem}
   The sheaf $\mathbb{E}$ is locally free on $M_1\times M_2$.
\end{lem}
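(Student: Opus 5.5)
The plan is to apply cohomology and base change to the relative Ext sheaves. The key input is that the fibrewise dimension of \(\operatorname{Ext}^1\) is constant.

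First, fix a point \((E,F)\in M_1\times M_2\). We will show that \(\operatorname{Hom}_C(F,E)=0\). Both \(E\) and \(F\) are stable, and
\[
\mu(F)=\frac ft=\frac dr+\frac1{rt}>\frac dr>\frac es=\mu(E),
\]
where \(\frac es=\frac dr-\frac1{rs}\) by (\ref{closest}). Any nonzero map from a stable bundle to a stable bundle of strictly smaller slope vanishes, so \(\operatorname{Hom}(F,E)=0\). Since \(C\) is a curve, \(\operatorname{Ext}^i(F,E)=0\) for \(i\geq 2\). Riemann--Roch then gives
\[
\dim\operatorname{Ext}^1(F,E)=-\chi(F^\vee\otimes E)=-(tf\cdot 0 + te-sf) - st(1-g)\,,
\]
that is, \(\dim\operatorname{Ext}^1(F,E)=sf-te+st(g-1)=1+st(g-1)\), using \(sf-te=s(d-e)-(r-s)e=sd-re=1\). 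In particular this dimension is independent of the point \((E,F)\).

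Next, write \(\mathcal{H}=\pi_{23}^*\mathcal{E}_2^\vee\otimes\pi_{13}^*\mathcal{E}_1\), which is a vector bundle on \(M_1\times M_2\times C\). Then
\[
\mathcal{E}xt^i_{\pi_{12}}(\pi_{23}^*\mathcal{E}_2,\pi_{13}^*\mathcal{E}_1)=R^i\pi_{12*}\mathcal{H}.
\]
The morphism \(\pi_{12}\) is flat and projective, and \(\mathcal{H}\) is flat over \(M_1\times M_2\). Its restriction to the fibre over \((E,F)\) is \(F^\vee\otimes E\), whose \(H^1\) has constant dimension \(1+st(g-1)\) and whose \(H^2\) vanishes. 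Grauert's theorem (or cohomology and base change, applied descending from \(i=2\)) then shows that \(R^1\pi_{12*}\mathcal{H}\) is locally free of rank \(1+st(g-1)\), with fibres \(\operatorname{Ext}^1(F,E)\). This agrees with the cited proposition of Grothendieck. The vanishing of \(H^0\) also gives \(\pi_{12*}\mathcal{H}=0\), which is consistent with this.

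The only real point is the vanishing of \(\operatorname{Hom}(F,E)\), since it is what makes the dimension constant. It follows from the slope inequality forced by the choice of \((s,e)\), so I expect no serious obstacle. One subtlety is that the universal bundles are determined only up to twists by line bundles pulled back from \(M_i\). This changes \(\mathbb{E}\) by a line bundle twist and does not affect local freeness.
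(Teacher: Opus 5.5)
Your proposal is correct and follows essentially the same route as the paper: $\operatorname{Hom}(F,E)=0$ because $\mu(F)>\mu(E)$, so Riemann--Roch gives a constant $\dim\operatorname{Ext}^1(F,E)$, and Grauert's theorem then yields local freeness. You also spell out two things the paper leaves implicit: the slope inequality coming from $ds-re=1$, and the resulting rank $1+st(g-1)$.
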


\begin{proof}
    It suffices to note that for any closed point $x=(E,F)\in M_1\times M_2$, 
    the stalk at $x$ is
    \[
        \mathcal{E}xt^1_{\pi_{12}}\!\left(\pi_{23}^*\mathcal{E}_2, \pi_{13}^*\mathcal{E}_1\right)_x
        = \operatorname{Ext}^1_C(F,E).
    \]
    This is a $k$-vector space.  
    By the Riemann–Roch theorem, we have
    \begin{align*}
        \dim \operatorname{Hom}_C(F,E) - \dim \operatorname{Ext}^1_C(F,E)=e(r-s) -(d-e)s + s(r-s)(1-g).
    \end{align*}
    Since $E$ and $F$ are chosen such that $\mu(F) > \mu(E)$, we have $\operatorname{Hom}_C(F,E)=0$.  
    Thus, $\operatorname{Ext}^1_C(F,E)$ has constant dimension
    \[
        \dim \operatorname{Ext}^1_C(F,E) = (d-e)s-e(r-s) + s(r-s)(g-1),
    \]
    for all $x\in (E,F)\in M_1\times M_2$.  
    By Grauert’s theorem \cite[Corollary~12.9]{Har77}, 
    $\mathbb{E}$ is locally free on $M_1\times M_2$.
\end{proof}

Now, let $p\colon Z=\mathbb{P}(\mathbb{E})\to M_1\times M_2$ be the projection from the projectivization.  
At the point $(E,F)$, its fiber $\mathbb{P}\!\operatorname{Ext}^1_C(F,E)$ 
corresponds to the projective space of extension classes
\[
    0 \longrightarrow E \longrightarrow G \longrightarrow F \longrightarrow 0.
\]
With the convention that \(\mathbb P(\mathbb{E})\) parametrizes one-dimensional
subspaces in the fibers of \(\mathbb{E}\), the tautological line bundle
\(\mathcal O_Z(-1)\subset p^*\mathbb{E}\) determines a universal extension on
\(Z\times C\). By Lemma \ref{lem:key}, the middle term of every fiber of this extension is
stable of rank \(r\) and degree \(d\). Hence the universal property of the fine
moduli space \(M(r,d)\) gives a morphism \(q:Z\to M(r,d)\), which maps the universal extension (\ref{eq:ext}) to a point $G\in M$.

Thus we have a diagram
\begin{equation*}
    \begin{tikzcd}[column sep=small]
& Z \arrow[dl,"p"'] \arrow[dr,"q"] & \\
M_{1}\times M_2  & & M
\end{tikzcd}.
\end{equation*}

In particular, $Z$ can be viewed as a correspondence between $M_1\times M_2$ and $M$. 

Let
\(
i=(p,q):Z\rightarrow \left(M_{1}\times M_2\right)\times M.
\)
The correspondence \(Z\) defines a Fourier--Mukai kernel
\[
\mathcal K=i_*\mathcal O_Z\in D^b\left(\left(M_{1}\times M_2\right)\times M\right).
\]

Let \(\pi_1:\left(M_{1}\times M_2\right)\times M\to \left(M_{1}\times M_2\right)\) and \(\pi_2:\left(M_{1}\times M_2\right)\times M\to M\) be the two projections.
The Fourier--Mukai functor associated with
\(\mathcal K=i_*\mathcal O_Z\) is
\[\Phi(-):=
\Phi_{\mathcal K}(-)
=
R\pi_{2*}\bigl(\pi_1^*(-)\otimes^{\mathbf L}\mathcal K\bigr).
\]
By the projection formula applied to \(i:Z\to \left(M_{1}\times M_2\right)\times M\), we obtain a natural
isomorphism
\[
\Phi (-)\cong Rq_*Lp^*(-).
\]
Since \(p\) is a projective bundle, it is flat, and hence \(Lp^*=p^*\). Therefore
\begin{equation}\label{functor1}
    \Phi \cong Rq_*\circ p^*.
\end{equation}

Hence there exists an exact functor $\Phi\colon \db(M_1\times M_2)\to \db(M)$, which proves Theorem \ref{thm:main-deg} .
\medskip

We next consider the moduli space with fixed determinants.
Suppose $L_1$ and $L_2$ are line bundles of degrees $e$ and $f$ respectively such that $L_1\otimes L_2=L$. 
Let \[N_1=N(s,L_1)\quad\mbox{and}\quad N_2=N(t,L_2).\]

 For $i=1,2$, let 
 \begin{align*}
     \cF_i\to N_i\times C
 \end{align*}
be the universal bundle over the fine moduli space $N_i$. Denote the projections from $M_1\times M_2\times C$ as $\pi_{ij}$ for $i,j=1,2,3$, by abuse of notation. 
The coherent sheaf 
\begin{align*}
    \bF=\lExt^1_{\pi_{12}}(\pi_{23}^*\cF_2,\pi_{13}^*\cF_1)
\end{align*}
on $N_1\times N_2$ is also locally free.
By abuse of notation, 
    let $p\colon Z^{\det}=\bP(\bF)\to N_1\times N_2$
be the canonical projection of the projective bundle whose fiber over $(E,F)$ is $\bP \Ext^1(F,E)$. 
According to Lemma~\ref{lem:key} and the previous discussions, there is a modular map 
    $q\colon Z^{\det}\to N$.
Thus we have a diagram
\begin{equation*}
    \begin{tikzcd}[column sep=small]
& Z^{\det} \arrow[dl,"p"'] \arrow[dr,"q"] & \\
N_{1}\times N_2  & & N
\end{tikzcd}
\end{equation*}
Again, $Z^{\det}$ can be viewed as a correspondence between $N_1\times N_2$ and $N$. 
Thus we can define the Fourier--Mukai functor
\begin{align}\label{eq:key-functor-det}
    \Psi=Rq_{*}\circ p^*\colon \db(N_1\times N_2) &\to  \db(N) ,   
\end{align}
which is also an exact functor. This proves Theorem \ref{thm:main}.  Notice the definitions of $Z^{\det}$ and $\Psi$ depend on a choice of a pair of line bundles $(L_1,L_2)$.  

This construction gives a natural Fourier--Mukai type functor associated with
the extension correspondence.
\medskip

\noindent\textbf{Acknowledgements.}
This paper grew out of part of the author's master's thesis written at Tongji University. The author is grateful to Yinbang Lin for guidance and encouragement during the thesis work.
\medskip

\bibliography{main}

@article{Ram73,
  author  = {Ramanan, S.},
  title   = {The moduli spaces of vector bundles over an algebraic curve},
  journal = {Math. Ann.},
  volume  = {200},
  year    = {1973},
  pages   = {69--84},
  doi     = {10.1007/BF01578292}
}

@book{Har77,
  author    = {Hartshorne, Robin},
  title     = {Algebraic Geometry},
  series    = {Graduate Texts in Mathematics},
  volume    = {52},
  publisher = {Springer},
  address   = {New York},
  year      = {1977},
  doi       = {10.1007/978-1-4757-3849-0}
}

@article{KP95,
  author  = {Kouvidakis, Alexis and Pantev, Tony},
  title   = {The automorphism group of the moduli space of semistable vector bundles},
  journal = {Math. Ann.},
  volume  = {302},
  year    = {1995},
  pages   = {225--268}
}

@article{BO01,
  author  = {Bondal, Alexei and Orlov, Dmitri},
  title   = {Reconstruction of a variety from the derived category and groups of autoequivalences},
  journal = {Compos. Math.},
  volume  = {125},
  year    = {2001},
  number  = {3},
  pages   = {327--344}
}

@article{TT24,
  author  = {Tevelev, Jenia and Torres, Sofia},
  title   = {The {BGMN} conjecture via stable pairs},
  journal = {Duke Math. J.},
  volume  = {173},
  year    = {2024},
  number  = {18},
  pages   = {3495--3557},
  doi     = {10.1215/00127094-2024-0016}
}

@article{BGM23,
  author  = {Belmans, Pieter and Galkin, Sergey and Mukhopadhyay, Swarnava},
  title   = {Decompositions of moduli spaces of vector bundles and graph potentials},
  journal = {Forum Math. Sigma},
  volume  = {11},
  year    = {2023},
  pages   = {e24},
  doi     = {10.1017/fms.2023.14}
}

@article{Nar17,
  author  = {Narasimhan, M. S.},
  title   = {Derived categories of moduli spaces of vector bundles on curves},
  journal = {J. Geom. Phys.},
  volume  = {122},
  year    = {2017},
  pages   = {53--58},
  doi     = {10.1016/j.geomphys.2017.01.018}
}

@article{Thad94,
  author  = {Thaddeus, Michael},
  title   = {Stable pairs, linear systems and the {V}erlinde formula},
  journal = {Invent. Math.},
  volume  = {117},
  year    = {1994},
  number  = {2},
  pages   = {317--353},
  doi     = {10.1007/BF01232244}
}

@article{Tev24,
  author        = {Tevelev, Jenia},
  title         = {Braid and Phantom},
  journal       = {arXiv preprint},
  year          = {2024},
  eprint        = {2304.01825},
  archivePrefix = {arXiv}
}

@book{Gro62,
  author    = {Grothendieck, Alexander},
  title     = {Fondements de la g{\'e}om{\'e}trie alg{\'e}brique},
  subtitle  = {Extraits du S{\'e}minaire Bourbaki, 1957--1962},
  publisher = {Secr{\'e}tariat math{\'e}matique},
  address   = {Paris},
  year      = {1962}
}

@article{Orl97,
  author  = {Orlov, Dmitri O.},
  title   = {Equivalences of derived categories and {$K3$} surfaces},
  journal = {J. Math. Sci.},
  volume  = {84},
  year    = {1997},
  number  = {5},
  pages   = {1361--1381},
  doi     = {10.1007/BF02399195}
}

@article{New68,
  author  = {Newstead, Peter E.},
  title   = {Stable bundles of rank 2 and odd degree over a curve of genus 2},
  journal = {Topology},
  volume  = {7},
  year    = {1968},
  pages   = {205--215}
}

@article{Tu93,
  author  = {Tu, Loring W.},
  title   = {Semistable bundles over an elliptic curve},
  journal = {Adv. Math.},
  volume  = {98},
  year    = {1993},
  number  = {1},
  pages   = {1--26}
}

@article{Gro57,
  author  = {Grothendieck, Alexander},
  title   = {Sur la classification des fibr{\'e}s holomorphes sur la sph{\`e}re de {R}iemann},
  journal = {Amer. J. Math.},
  volume  = {79},
  year    = {1957},
  number  = {1},
  pages   = {121--138}
}
\bibliographystyle{alpha}

\end{document}